\documentclass[11pt,twoside]{amsart}
\usepackage[english]{babel}
\usepackage{amsmath, amsthm, amssymb, amsfonts}
\usepackage{color}
\usepackage{latexsym}
\usepackage{graphicx}
\usepackage{pdfpages}
\usepackage {bm}
\usepackage {indentfirst} 

\usepackage{hyperref,cite}

\linespread{1.3}
\topmargin-1cm     
\textwidth15.5cm   
\textheight22.5cm
\advance\hoffset by -1.5cm

\newcommand{\N}{\mathbb{N}}

\newcommand{\Z}{\mathbb{Z}}
\newcommand{\er}{\mathbb{R}}
\newcommand{\B}{\mathcal{B(H)}}
\newcommand{\R}{\mathcal{R}}
\newcommand{\n}{\mathcal{N}}
\newcommand{\h}{\mathcal{H}}

\newcommand{\F}{F{\o}lner }

  \newcommand{\norm}[1]{\left\| #1 \right\|}
  \newcommand{\abs}[1]{\left\vert #1 \right\vert}
  \newcommand{\ps}[2]{\left\langle #1,#2 \right\rangle}
 \newcommand{\inserteq}[2]{\begin{equation}{#1}\label{#2}\end{equation}}
 
 \newcommand{\sumi}{\int}
 \newcommand{\dd}[1]{\, \mathrm{d}m(#1)}

\newtheorem{theorem}{Theorem}[section]

\newtheorem{corollary}[theorem]{Corollary}
\theoremstyle{definition}

\theoremstyle{remark}
\newtheorem{remark}[theorem]{Remark}
\numberwithin{equation}{section}

\numberwithin{equation}{section}

\begin{document}

\title[F{\o}lner sets and isometric representations of semigroups]{Similarity problems, F{\o}lner sets and isometric representations of amenable semigroups}

 \author[C. Badea]{Catalin Badea}
      \address{Univ. Lille, CNRS, UMR 8524 - Laboratoire Paul Painlev\'e, Lille, France} 
 \email{catalin.badea@univ-lille.fr}
\author[L. Suciu]{Laurian Suciu}
 \address{Department of Mathematics and Informatics, ``Lucian Blaga'' University of Sibiu, Dr. Ion Ra\c{t}iu 5-7, Sibiu, 550012, Romania}
 \email{laurians2002@yahoo.com}
\keywords{operators similar to isometries, unitarizable representations, amenable semigroups, F\o lner conditions}
 \subjclass[2010]{47A05, 47A15, 43A07.}
 \thanks{This work was supported in part by
 the project FRONT of the French
National Research Agency (grant ANR-17-CE40-0021) and by the Labex CEMPI (ANR-11-LABX-0007-01).}

\begin{abstract}
We revisit Sz.-Nagy's criteria for similarity of Hilbert space bounded linear operators to isometries or unitaries and present new ones. We also discuss counterparts of the Dixmier-Day theorem concerning bounded representations of amenable groups and semigroups. We highlight the role of \F sets in similarity problems in both settings of unimodular, $\sigma$-compact, amenable groups and in discrete semigroups possessing the Strong \F condition (SFC).
\end{abstract}
\maketitle
\section{Introduction}
\medskip
A bounded linear operator $T\in\B$ acting on a Hilbert space $\h$ is said to be similar to a unitary (or to an isometry) if $L^{-1}TL$ is a unitary operator (or an isometry) for some invertible operator $L\in \B$. The first criterion of similarity to a unitary operator was obtained by B\'ela Sz.-Nagy \cite{Nagy} in $1947$: a Hilbert space operator $T$ is similar to a unitary operator if and only if $T$ is invertible and both $T$ and its inverse $T^{-1}$ are power-bounded, that is $\sup_{n\in\Z}\norm{T^n} < \infty$. The simple, yet ingenious proof given by Sz.-Nagy uses a Banach limit $\mathcal{L}$, i.e. a positive linear functional of norm one on $\ell^{\infty}(\Z)$, invariant by translations, which extends to bounded sequences the classical limit of convergent sequences. Starting from an operator $T$ with both $T$ and $T^{-1}$ power-bounded, one can consider a new norm
$$ \abs{x}^2 = \mathcal{L} \left(\left(\norm{T^nx}^2\right)_{n\in\Z}\right) .$$
Then $\abs{\cdot}$ is an equivalent norm, coming from an inner product, and the invariance by translations property of $\mathcal{L}$ implies that $\abs{Tx} = \abs{x}$. Thus $T$ is similar to an invertible isometry, i.e. a unitary operator. Basically the same proof shows that $T\in\B$ is similar to an isometry if (and only if) there are two positive constants $m$ and $M$ such that
\begin{equation}\label{eq:0}
m\norm{x} \le \norm{T^nx} \le M\norm{x} \quad \textrm{ for every } n\ge 1  \textrm{ and every } x\in \h .
\end{equation}

The existence of Banach limits, first proved by Mazur and Banach, is a demonstration of the amenability of the group $\Z$. What are now called amenable groups were introduced in 1929 by von Neumann \cite{vN}. The term ``amenable'' was coined by Day \cite{DayIllinois}, who also broadened consideration to encompass semigroups. We refer for instance to the book \cite{Paterson} for more information about amenability of (semi)groups. It was independently remarked in 1950 by Dixmier \cite{Dixmier} and Day \cite{Day}, and in 1951 by Nakamura and Takeda \cite{NaTa}, that a variant of Sz.-Nagy's proof shows that uniformly bounded representations of amenable groups into $\B$ are \emph{unitarizable}: they are unitary representations in an equivalent Hilbertian norm.  One can also  consult \cite{PisierSurvey} for a survey about the converse question (Is a group amenable if all its bounded representations are unitarizable~?), which is presently still open in full generality.

The aim of this note is to revisit Sz.-Nagy's similarity results, the Dixmier-Day theorem and its analogue for ``isometrizable'' representations of amenable semigroups. Instead of using a Banach limit we will use a limit along a non-principal ultrafilter. A natural question to address in the case of the semigroup $\N$ is if it is true that we can replace the condition \eqref{eq:0} in Sz.-Nagy's criterion by a similar boundedness condition in terms of Ces\`{a}ro means, namely
$$m^2\norm{x}^2 \le \frac{1}{N}\sum_{n=0}^{N-1}\norm{T^nx}^2 \le M^2\norm{x}^2  \quad \textrm{ for every } N\ge 1  \textrm{ and every } x\in \h .$$
It is one of the aims of this note to give a positive answer to this question (see Corollary \ref{cor:ces}). This is obtained as a consequence of the following general result.
\begin{theorem}\label{thm:K}
Let $T\in \B$. Suppose there exist positive constants $m$ and $M$ and an operator $K\in \B$ with closed range and finite-dimensional kernel such that:
\begin{equation}\label{eq:1}
m\norm{Kx}^2 \le \frac{1}{N}\sum_{n=0}^{N-1}\norm{T^nx}^2 \le M\norm{x}^2
\end{equation}
for every $N\ge 1$ and every $x\in \h$
and
\begin{equation}\label{eq:3}
\text{every eigenvalue of } T\text{, if any, has modulus one.}
\end{equation}
Then $T$ is similar to an isometry.
\end{theorem}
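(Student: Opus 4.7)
The plan is to adapt Sz.-Nagy's averaging argument, replacing his Banach-limit construction with an ultrafilter limit of the Cesàro averages that appear in \eqref{eq:1}. Fix a non-principal ultrafilter $\mathcal{U}$ on $\N$ (to be chosen below) and define
$$\varphi(x,y) = \lim_{N\to\mathcal{U}} \frac{1}{N}\sum_{n=0}^{N-1}\langle T^n x, T^n y\rangle.$$
The upper bound in \eqref{eq:1} together with Cauchy--Schwarz gives $|\varphi(x,y)|\le M\|x\|\|y\|$, so $\varphi$ is a bounded positive sesquilinear form, represented as $\varphi(x,y)=\langle Ax,y\rangle$ for a positive operator $A\in\B$ with $\|A\|\le M$. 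The strategy is to prove that $A$ is invertible and satisfies $T^*AT=A$; then $V:=A^{1/2}TA^{-1/2}$ is an isometry similar to $T$.

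To verify the intertwining $T^*AT=A$, a direct index shift reduces matters to $\lim_{\mathcal{U}}\langle T^N x,T^N y\rangle/N=0$, which by Cauchy--Schwarz in turn reduces to $\lim_{\mathcal{U}}\|T^N x\|^2/N=0$ for every $x\in\h$. This is where the choice of $\mathcal{U}$ matters. For each $\varepsilon>0$ the set $E_{x,\varepsilon}:=\{n\ge 1:\|T^n x\|^2\ge \varepsilon n\}$ has natural density zero: if $n_1<\dots<n_k$ are its elements lying in $[1,N]$, then $n_i\ge i$, and summing $\|T^{n_i}x\|^2\ge \varepsilon i$ against the bound $\sum_{n\le N}\|T^n x\|^2\le MN\|x\|^2$ yields $\varepsilon k(k+1)/2\le MN\|x\|^2$, hence $k=O(\sqrt{N})$. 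Since sets with complement of upper density zero form a filter (closure under finite intersection comes from subadditivity of upper density), we extend it to an ultrafilter $\mathcal{U}$, along which $\|T^N x\|^2/N\to 0$ and therefore $T^*AT=A$.

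It remains to show that $A$ is invertible. The lower bound in \eqref{eq:1} passes to the limit and gives $\varphi(x,x)\ge m\|Kx\|^2$, so $\ker A\subseteq\ker K$ is finite-dimensional; moreover $\ker A$ is $T$-invariant, since $Ax=0$ forces $\langle ATx,Tx\rangle=\langle Ax,x\rangle=0$ and hence $ATx=0$ as $A\ge 0$. On any finite-dimensional complex invariant subspace $T$ has an eigenvector, whose eigenvalue has modulus one by \eqref{eq:3}; for such $x$ one has $\|T^n x\|=\|x\|$ for every $n$, whence $\varphi(x,x)=\|x\|^2>0$, contradicting $x\in\ker A$. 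To promote injectivity to invertibility, suppose $\|x_n\|=1$ with $\varphi(x_n,x_n)\to 0$; then $\|Kx_n\|\to 0$, and since $K$ has closed range and finite-dimensional kernel, writing $x_n=y_n+z_n$ with $y_n\in\ker K$ and $z_n\in(\ker K)^\perp$ forces $\|z_n\|\to 0$. Compactness of the unit ball of $\ker K$ yields a subsequential limit $y$ with $\|y\|=1$, and continuity of $\varphi$ forces $\varphi(y,y)=0$, a contradiction. The main obstacle in the whole argument is really the second step: the Cesàro hypothesis is much weaker than power boundedness, so the boundary term $\|T^N x\|^2/N$ need not vanish along every sequence, and the density-zero argument above is precisely what allows one to absorb it into a suitable ultrafilter.
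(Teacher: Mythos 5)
Your proof is correct, and while the overall skeleton (an ultrafilter limit of the Ces\`aro averages defining a positive operator $A$ with $T^*AT=A$, followed by a proof that $A$ is invertible) coincides with the paper's, you execute the two delicate sub-steps by genuinely different means. For the boundary term $\norm{T^Nx}^2/N$, the paper observes that \eqref{eq:1} makes $T$ absolutely Ces\`aro bounded and invokes \cite[Theorem 2.4]{muller} to get $\norm{T^N}/\sqrt{N}\to 0$ in operator norm, so the term vanishes along \emph{every} non-principal ultrafilter; you instead prove that $\{n:\norm{T^nx}^2\ge\varepsilon n\}$ has density zero and build $\mathcal{U}$ to contain all density-one sets. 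Your route is self-contained and gives only a pointwise-in-$x$ statement along a tailored ultrafilter, which is all the intertwining identity needs; the paper's route outsources the work but obtains a stronger, uniform conclusion. For the kernel, the paper runs an explicit induction showing the orbit of a nonzero kernel vector is linearly independent (manufacturing an eigenvector by hand from a putative dependence relation) and then contradicts $\Dim\n(K)<\infty$; you note instead that $T^*AT=A$ makes $\ker A$ a $T$-invariant finite-dimensional complex subspace, extract an eigenvector directly, and contradict \eqref{eq:3} via $\varphi(x,x)=\norm{x}^2$. This is cleaner and isolates the same underlying mechanism. Finally, to pass from injectivity to invertibility the paper uses that $mK^*K\le D^2$ with $\mathcal{R}(K)$ closed forces $0$ to be isolated in $\sigma(D)\cup\{0\}$, whereas you use a minimizing-sequence/compactness argument through the finite-dimensional $\ker K$; both are valid, and yours avoids spectral theory at the cost of a subsequence extraction. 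All steps check out (in particular, density-one sets do form a filter extending the Fr\'echet filter, and $K$ being bounded below on $(\ker K)^\perp$ is exactly what closed range provides), so no gaps.
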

Note that, conversely, if $T$ is similar to an isometry, then \eqref{eq:1} is satisfied with $K=I$ (the identity operator)
and \eqref{eq:3} is readily verified. Theorem \ref{thm:K} is a generalization of a result from \cite{Pruv} (see also \cite{Pruv2}).
In the above theorem, \eqref{eq:3} follows for instance from the condition that for any non zero $x\in\h$, the closure
of the orbit $ \{T^nx : n\ge 0\}$ of $x$ does not contain the zero vector. When $K=I$, the condition \eqref{eq:3} follows from \eqref{eq:1} (see the proof of Corollary~\ref{cor:ces}).

Considering Ces\`{a}ro means as in \eqref{eq:1} leads to considering \F sets in amenable groups and semigroups. Note that, contrary to the case of groups, amenability of semigroups is not characterized by \F type conditions, or indeed by any known elementary combinatorial condition. We discuss here (Theorem \ref{thm:SFC}) an analogue of the Dixmier-Day theorem
for discrete semigroups satisfying the strong \F condition (SFC) of Argabright and Wilde \cite{Arga}. We also consider formally weaker conditions implying unitarizability in the case of unimodular, $\sigma$-compact amenable groups. Some of these results generalize results from \cite{BerksonG,vC1,vC2} (for $G=\Z$) and from \cite{GuoZwart} (for $G=\er$); see Theorem \ref{thm:cs} and Theorem \ref{thm:extra}.

\section{Unitarizable representations of unimodular amenable groups}
Let $G$ be a $\sigma$-compact unimodular amenable group. We denote by $m$ a right invariant Haar measure on $G$ and write $\abs{E}$ instead of $m(E)$ for a measurable $E\subset G$. In this note, by a \emph{symmetric \F sequence} we understand a sequence of \emph{symmetric} compact sets $(F_N)_{N\ge 1}$ with $0 < \abs{F_N} < \infty$ and $F_N^{-1} = F_N$ and which is a (right) \F sequence for $G$, that is
\begin{equation}\label{eq:f}
\lim_{N\to\infty} \frac{\abs{(F_{N}s\, \triangle F_N)}}{\abs{F_N}} = 0
\end{equation}
for every $s\in G$. Here $A^{-1} = \{a^{-1} : a\in A\}$, $As = \{as : a\in A\}$ and $A\, \triangle B = (A\setminus B)\cup (B\setminus A)$ is the symmetric difference of $A$ and $B$. The existence of symmetric \F sets has been proved by Namioka \cite{Namioka} for countable amenable groups and by Emerson \cite{Emerson} in the general case. We may note in passing that Tessera proved in \cite{Tessera} that the property of being amenable and unimodular is invariant under large scale equivalence between $\sigma$-compact locally compact groups. We also note that a good part of the (large) literature concerning amenable groups deals with left invariant Haar measure, left \F sequences, etc. When dealing with variants of the Dixmier-Day theorem for groups and semigroups we found more convenient to work with right \F sequences. The results obtained in this section have counterparts for $\sigma$-compact amenable groups, \emph{i.e.}, without assuming unimodularity. We leave the task of writing these counterparts to the interested reader.

A \emph{continuous representation} $\pi$ of $G$ will be a map $\pi : G \mapsto \B$ from $G$ to the $C^*$-algebra of all bounded operators on $\h$ such that $\pi(1) = I$ and $\pi(g_1g_2) = \pi(g_1)\pi(g_2)$ of every $g_1,g_2\in G$ and that is continuous for the strong operator topology on $\B$. We say that $\pi$ is an \emph{unitary representation} if $\pi(g)^{\ast}\pi(g) = I = \pi(g)\pi(g)^{\ast}$ for any $g\in G$.

\begin{theorem}\label{thm:amenthm}
Let $G$ be a $\sigma$-compact unimodular amenable group generated by $Q\subset G$ and suppose that $(F_N)_{N\ge 1}$ is a symmetric \F sequence. Let $\pi : G \mapsto \B$ be a (strongly) continuous representation of $G$ by invertible operators on a complex Hilbert space $\h$. Suppose that there is a constant $C\ge 1$ such that
\begin{equation}\label{eq:2a}
\frac{1}{\abs{F_N}} \sumi_{F_N}\norm{\pi(g)(x)}^2\dd{g} \le C^2\norm{x}^2
\end{equation}
for every $x\in\h$,
\begin{equation}\label{eq:2b}
\frac{1}{\abs{F_N}} \sumi_{F_N}\norm{\pi(g)^{\ast}(x)}^2\dd{g} \le C^2\norm{x}^2
\end{equation}
for every $x\in\h$, and that
\begin{equation}\label{eq:22}
\lim_{N\to\infty}\frac{1}{\abs{F_N}}  \sumi_{(F_{N}s\, \triangle F_N)}\norm{\pi(g)}^2\dd{g} = 0
\end{equation}
for every $s\in Q$.
Then $\pi$ is unitarizable and there exists an invertible $L\in\B$ with $\|L^{-1}\| \|L\| \le C^2$ such that $L^{-1}\pi L$ is a unitary representation of $G$.
\end{theorem}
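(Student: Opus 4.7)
The plan is to adapt Sz.-Nagy's classical averaging trick, replacing his Banach limit by the limit along a non-principal ultrafilter $\mathcal{U}$ on $\N$ of Ces\`aro averages with respect to the \F sequence $(F_N)$. Concretely, I would define on $\h$ the sesquilinear form
$$\ps{x}{y}_\pi := \lim_\mathcal{U}\frac{1}{|F_N|}\sumi_{F_N}\ps{\pi(g)x}{\pi(g)y}\dd{g}.$$
Each of the approximating forms is positive-semidefinite, and \eqref{eq:2a} gives the uniform upper bound $\ps{x}{x}_\pi\le C^2\norm{x}^2$, so the limit exists, is sesquilinear, and the form is continuous. To promote it to an equivalent inner product one still needs a matching lower bound, which is the heart of the argument.

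For the lower bound, write $x=\pi(g^{-1})\pi(g)x$ so that, pointwise in $g$,
$$\norm{x}^2 = |\ps{\pi(g)x}{\pi(g^{-1})^\ast x}| \le \norm{\pi(g)x}\cdot\norm{\pi(g^{-1})^\ast x}.$$
Averaging over $g\in F_N$ and applying Cauchy--Schwarz in $L^2(F_N,dm)$ gives
$$\norm{x}^2 \le \left(\frac{1}{|F_N|}\sumi_{F_N}\norm{\pi(g)x}^2\dd{g}\right)^{1/2}\left(\frac{1}{|F_N|}\sumi_{F_N}\norm{\pi(g^{-1})^\ast x}^2\dd{g}\right)^{1/2}.$$
Here the symmetry $F_N^{-1}=F_N$ and the unimodularity of $G$ allow the change of variable $h=g^{-1}$ to rewrite the second factor as $\frac{1}{|F_N|}\sumi_{F_N}\norm{\pi(h)^\ast x}^2\dd{h}\le C^2\norm{x}^2$ by \eqref{eq:2b}. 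Passing to the limit along $\mathcal{U}$ yields $\norm{x}\le C\,\ps{x}{x}_\pi^{1/2}$, the desired equivalence.

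For $G$-invariance, fix $s\in Q$ and use the right translation $g\mapsto gs$, measure-preserving by unimodularity, to write $\ps{\pi(s)x}{\pi(s)x}_\pi=\lim_\mathcal{U}\tfrac{1}{|F_N|}\sumi_{F_Ns}\norm{\pi(g)x}^2\dd{g}$. The absolute difference with $\ps{x}{x}_\pi$ is bounded by $\norm{x}^2\cdot\tfrac{1}{|F_N|}\sumi_{F_Ns\triangle F_N}\norm{\pi(g)}^2\dd{g}$, which tends to $0$ by the hypothesis \eqref{eq:22}. Thus $\ps{\cdot}{\cdot}_\pi$ is $\pi(s)$-invariant for every $s\in Q$; since $\pi(s^{-1})=\pi(s)^{-1}$ and $Q$ generates $G$, invariance propagates to all of $G$. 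To conclude, the Riesz representation of $\ps{\cdot}{\cdot}_\pi$ against $\ps{\cdot}{\cdot}$ yields a positive invertible $P\in\B$ with $C^{-2}I\le P\le C^2I$; setting $L:=P^{1/2}$ gives $\norm{L},\norm{L^{-1}}\le C$, and the invariance relation $\pi(s)^\ast P\pi(s)=P$ translates into the fact that $L^{-1}\pi(s)L$ is unitary for every $s\in G$. Strong continuity transfers from $\pi$ to $L^{-1}\pi L$ automatically.

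The main obstacle is the lower bound: it is precisely there that the symmetry of $F_N$, the unimodularity of $G$, and the second bound \eqref{eq:2b} on averages of $\pi(g)^\ast$ are all genuinely needed, since the original hypothesis \eqref{eq:2a} only controls averages of $\pi(g)$ itself and leaves open the possibility of a non-trivial kernel for the form $\ps{\cdot}{\cdot}_\pi$.
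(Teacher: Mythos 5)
Your proposal is correct and follows essentially the same route as the paper: the ultrafilter limit of Ces\`aro averages over the \F sets, the lower bound obtained by the duality trick combining Cauchy--Schwarz, \eqref{eq:2b}, the symmetry $F_N^{-1}=F_N$ and unimodularity, and the invariance via the symmetric-difference estimate and \eqref{eq:22}, then propagation from $Q$ to $G$. The only slip is cosmetic: with $L=P^{1/2}$ the invariance $\pi(s)^{\ast}P\pi(s)=P$ makes $L\pi(s)L^{-1}$ (not $L^{-1}\pi(s)L$) unitary, so you should take $L=P^{-1/2}$, which still gives $\|L\|\|L^{-1}\|\le C^2$.
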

In the case when $\pi$ is a bounded representation with
$$\sup_{g\in G} \norm{\pi(g)} \le M ,$$
the inequalities \eqref{eq:2a} and \eqref{eq:2b} are verified with $C=M$, while the condition \eqref{eq:22} holds true since $(F_N)_{N\ge 1}$ is a right \F sequence. Therefore Theorem \ref{thm:amen} implies, when $G$ is unimodular, the Dixmier-Day theorem.

\begin{proof}[Proof of Theorem \ref{thm:amenthm}]
Let $x,y\in \h$. We have
\begin{align*}
\abs{\ps{x}{y}} &= \frac{1}{\abs{F_N}}\abs{\ps{x}{\sumi_{F_N}\pi(g)\pi(g^{-1})y\dd{g}}} \\
& = \frac{1}{\abs{F_N}} \abs{\sumi_{F_N}\ps{\pi(g)^{\ast}x}{\pi(g^{-1})y}\dd{g}}\\
& \le \frac{1}{\abs{F_N}} \sumi_{F_N} \norm{\pi(g)^{\ast}x}\norm{\pi(g^{-1})y} \dd{g} &\quad\text{(Cauchy-Schwarz)}\\
& \le \left(\frac{1}{\abs{F_N}}\sumi_{F_N}\norm{\pi(g)^{\ast}x}^2 \dd{g}\right)^{1/2} \left(\frac{1}{\abs{F_N}}\sumi_{F_N}\norm{\pi(g^{-1})y}^2 \dd{g}\right)^{1/2} &\quad\text{(Cauchy-Schwarz)}\\
& \le C\norm{x} \left(\frac{1}{\abs{F_N}}\sumi_{F_N}\norm{\pi(g^{-1})y}^2 \dd{g}\right)^{1/2} &\quad \text{(using }\eqref{eq:2b}).
\end{align*}
Taking the supremum after all $x$ in the unit ball we obtain
\begin{equation*}
\frac{1}{\abs{F_N}}\sumi_{F_N}\norm{\pi(g^{-1})y}^2 \dd{g} \ge \frac{1}{C^2} \norm{y}^2
\end{equation*}
for every $y\in \h$.
As $G$ is unimodular, the Haar measure is inverse invariant. Using also the symmetry condition $F_N^{-1} = F_N$, we obtain
\begin{equation}\label{eq:lower}
\frac{1}{\abs{F_N}}\sumi_{F_N}\norm{\pi(g)y}^2 \dd{g} \ge \frac{1}{C^2} \norm{y}^2
\end{equation}
for every $y\in \h$.

Let $\mathcal{U}$ be a non-principal ultrafilter on $\N$.  Define a new norm
\begin{equation}\label{eq:defnorm}
\abs{x}^2 = \lim_{\mathcal{U}}  \left(\frac{1}{\abs{F_N}}\sumi_{F_N} \norm{\pi(g)x}^2 \dd{g}\right) ,
\end{equation}
by taking the limit along $\mathcal{U}$ of the bounded sequence in \eqref{eq:defnorm} indexed by $N$.
Using \eqref{eq:2a} and \eqref{eq:lower} we have
\begin{equation}\label{eq:new2}
\frac{1}{C} \norm{x} \le \abs{x} \le C\norm{x}.
\end{equation}
Thus $\abs{\cdot}$ is an equivalent norm. It is also a Hilbertian norm for the inner product
$$ \lim_{\mathcal{U}}  \left( \frac{1}{\abs{F_N}}\sumi_{F_N} \ps{\pi(g)x}{\pi(g)y} \dd{g}\right) .
$$
Let $s\in Q$.
We can write
\begin{equation}\label{eq:nisom}
\abs{\pi(s)x}^2 -  \abs{x}^2 = \lim_{\mathcal{U}}  \left( \frac{1}{\abs{F_N}}\sumi_{F_N} \norm{\pi(gs)x}^2\dd{g} -  \frac{1}{\abs{F_N}}\sumi_{F_N} \norm{\pi(g)x}^2\dd{g}\right) .
\end{equation}
We have
\begin{align*}
& \abs{\frac{1}{\abs{F_N}}\sumi_{F_N} \norm{\pi(gs)x}^2\dd{g} -  \frac{1}{\abs{F_N}}\sumi_{F_N} \norm{\pi(g)x}^2\dd{g}} \\
&= \abs{\frac{1}{\abs{F_N}}\sumi_{F_Ns} \norm{\pi(h)x}^2\dd{h} -  \frac{1}{\abs{F_N}}\sumi_{F_N} \norm{\pi(g)x}^2\dd{g}}\\
&= \abs{\frac{1}{\abs{F_N}}\sumi_{F_{N}s\setminus F_N} \norm{\pi(h)x}^2\dd{h} -  \frac{1}{\abs{F_N}}\sumi_{F_{N}\setminus F_{N}s} \norm{\pi(g)x}^2\dd{g}}\\
&\le \frac{1}{\abs{F_N}} \sumi_{F_{N}s\triangle F_N} \norm{\pi(g)x}^2 \dd{g} .
\end{align*}
Using \eqref{eq:22} we obtain that the limit in \eqref{eq:nisom} is zero. Thus $\pi(s)$ is unitary (an invertible isometry) in the new norm for each $s\in Q$. As $Q$ generates $G$, $\pi(g)$ is a unitary in the new norm for every $g\in G$. Therefore $\pi$ is unitarizable. The statement about the similarity constant follows from \eqref{eq:new2} in the same way as in the proof of the Dixmier-Day theorem.
\end{proof}

In some cases, we do not need condition \eqref{eq:22} in Theorem \ref{thm:amenthm}. Under some additional conditions on \F sets, all representations satisfying \eqref{eq:2a} and \eqref{eq:2b} are already uniformly bounded and thus \eqref{eq:22} follows from the \F condition. 
Here is an example of such additional conditions, generalizing results from \cite{BerksonG} for $G=\Z$ and $F_N = \{-N, -N+1, \cdots , N-1, N\}$ (see also \cite{vC1,vC2}) and from \cite{GuoZwart} for $G=\er$. 
\begin{theorem}\label{thm:cs}
Let $G$ be a $\sigma$-compact unimodular amenable group and suppose that $(F_N)_{N\ge 1}$ is a symmetric \F sequence such $\cup_{N\ge 1} F_N = G$ and there exist $p\in \N\setminus \{0\}$ and $K> 0$ such that
\begin{equation}\label{eq:incl}
F_N\cdot F_N^{-1} \subset F_{pN}
\end{equation}
and
\begin{equation}\label{eq:meas}
\frac{\abs{F_{pN}}}{\abs{F_N}} \le K
\end{equation}
for every $N$.
Then every strongly continuous representation of $G$ by invertible operators on a complex 
Hilbert space $\h$ satisfying \eqref{eq:2a} and \eqref{eq:2b} is unitarizable.
\end{theorem}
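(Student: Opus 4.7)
The plan is to reduce to Theorem~\ref{thm:amenthm} by establishing that, under the additional hypotheses \eqref{eq:incl} and \eqref{eq:meas}, any representation $\pi$ satisfying \eqref{eq:2a} and \eqref{eq:2b} is automatically \emph{uniformly bounded}. Once $M := \sup_{g\in G}\norm{\pi(g)} < \infty$ is in hand, the \F property of $(F_N)$ immediately yields
$$\frac{1}{\abs{F_N}}\sumi_{F_Ns\,\triangle\, F_N}\norm{\pi(g)}^2\dd{g} \le M^2\cdot \frac{\abs{F_Ns\,\triangle \,F_N}}{\abs{F_N}} \to 0$$
for every $s\in G$, so \eqref{eq:22} holds with $Q = G$, and Theorem~\ref{thm:amenthm} concludes.

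To obtain uniform boundedness, I first note that the derivation of the lower averaging bound \eqref{eq:lower} in the proof of Theorem~\ref{thm:amenthm} uses only \eqref{eq:2b}, the symmetry $F_N^{-1}=F_N$, and unimodularity; hence it remains valid here and gives
$$\frac{1}{\abs{F_N}}\sumi_{F_N}\norm{\pi(g)y}^2\dd{g} \ge \frac{1}{C^2}\norm{y}^2 \qquad (y \in \h,\ N\ge 1).$$
Now fix $g_0\in G$. Since $\bigcup_N F_N = G$, I pick $N_0$ with $g_0\in F_{N_0}$. Applying the above bound with $y = \pi(g_0)x$ and $N = N_0$ and then changing variables via right-invariance of $m$ produces
$$\norm{\pi(g_0)x}^2 \le \frac{C^2}{\abs{F_{N_0}}}\sumi_{F_{N_0}g_0}\norm{\pi(h)x}^2\dd{h}.$$
The key geometric step is the inclusion $F_{N_0}g_0\subset F_{pN_0}$: by symmetry of $F_{N_0}$ and \eqref{eq:incl}, $F_{N_0}\cdot F_{N_0} = F_{N_0}\cdot F_{N_0}^{-1}\subset F_{pN_0}$, and $g_0\in F_{N_0}$ then gives $F_{N_0}g_0\subset F_{N_0}\cdot F_{N_0}\subset F_{pN_0}$. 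Combining this with the doubling bound \eqref{eq:meas} and the upper averaging inequality \eqref{eq:2a} at level $pN_0$, the right-hand side is majorized by
$$\frac{C^2\,\abs{F_{pN_0}}}{\abs{F_{N_0}}}\cdot \frac{1}{\abs{F_{pN_0}}}\sumi_{F_{pN_0}}\norm{\pi(h)x}^2\dd{h} \;\le\; C^4 K\,\norm{x}^2,$$
which yields $\norm{\pi(g_0)x}\le C^2\sqrt{K}\,\norm{x}$ with a constant independent of $g_0$.

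The main obstacle is precisely this transfer step, since the hypotheses give only \emph{averaged} control of $\pi$ and we need pointwise control on $\norm{\pi(g_0)}$. The combination of the covering property $\bigcup_N F_N = G$, the product inclusion \eqref{eq:incl}, and the controlled growth \eqref{eq:meas} lets us localize to a single \F scale $F_{N_0}$ already containing $g_0$ and then absorb the translate $F_{N_0}g_0$ into the comparable set $F_{pN_0}$; everything else is bookkeeping around Theorem~\ref{thm:amenthm}.
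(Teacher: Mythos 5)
Your proof is correct and follows essentially the same route as the paper: derive the lower averaged bound \eqref{eq:lower} from \eqref{eq:2b}, then use \eqref{eq:incl} and \eqref{eq:meas} to compare the scales $N$ and $pN$ and conclude that $\pi$ is uniformly bounded, hence unitarizable. The only (cosmetic) difference is that you apply \eqref{eq:lower} to $\pi(g_0)x$ and \eqref{eq:2a} at scale $pN_0$ to bound $\norm{\pi(g_0)}$ from above directly, whereas the paper applies \eqref{eq:2a} at scale $pN$ to $\pi(k)x$ and \eqref{eq:lower} at scale $N$ to bound $\norm{\pi(k)x}$ from below; the two arguments are mirror images using the same ingredients.
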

\begin{proof}
We apply \eqref{eq:2a} to obtain
$$ \frac{1}{\abs{F_{pN}}} \sumi_{g\in F_{pN}}\norm{\pi(g)(x)}^2\dd{g} \le C^2\norm{x}^2$$
for every $x\in\h$. Replace $x$ in this inequality by $\pi(k)x$, where $k\in F_N$. Then 
$$\frac{1}{\abs{F_{pN}}} \sumi_{g\in F_{pN}}\norm{\pi(g)\pi(k)x}^2\dd{g} \le C^2\norm{\pi(k)x}^2$$
and so 
$$ \frac{1}{\abs{F_{pN}}} \sumi_{g\in F_{pN}k}\norm{\pi(g)(x)}^2\dd{g} \le C^2\norm{\pi(k)x}^2 .$$
As $F_N\cdot F_N^{-1} \subset F_{pN}$ we have $F_N \subset F_{pN}k$. Therefore 
$$ \sumi_{g\in F_{N}}\norm{\pi(g)(x)}^2\dd{g} \le \abs{F_{pN}}C^2\norm{\pi(k)x}^2 .$$
Thus, using \eqref{eq:meas}, we have 
$$ \frac{1}{\abs{F_{N}}} \sumi_{g\in F_{N}}\norm{\pi(g)(x)}^2\dd{g} \le KC^2\norm{\pi(k)x}^2$$
for every $x\in\h$. Using now \eqref{eq:lower}, we get $\norm{\pi(k)x} \ge \frac{1}{C^2\sqrt{K}}\norm{x}$ for every $x\in\h$ and every $k\in F_N$. Thus $\pi$ is a uniformly bounded representation, and so unitarizable.
\end{proof}
When the \F sets $F_N$ are balls with respect to an invariant metric, \eqref{eq:incl} is satisfied with $p=2$ and \eqref{eq:meas} is a doubling condition. 
See \cite[\S 6.2]{nevo} for a discussion of such \F sets. \F sets $(F_N)$ satisfying \eqref{eq:incl} and \eqref{eq:meas} satisfy the Tempelman condition 
\begin{equation}\label{eq:temp}
\abs{F_{N}^{-1}\cdot F_{N}} \le \textrm{const} \abs{F_{N}}
\end{equation}
for every $N$. 

Another instance where we do not need condition \eqref{eq:22} in Theorem \ref{thm:amenthm} is given in the following result. This time we assume a Ces\`aro-type boundedness condition on translations of \F sets.

\begin{theorem}\label{thm:extra}
Let $G$ be a $\sigma$-compact unimodular amenable group and suppose that $(F_N)_{N\ge 1}$ is a symmetric \F sequence. Let $\pi : G \mapsto \B$ be a (strongly) continuous representation of $G$ by invertible operators on a complex Hilbert space $\h$. Suppose that there is a constant $C\ge 1$ such that
\begin{equation}\label{eq:2aa}
\frac{1}{\abs{F_Ng}} \sumi_{F_Ng}\norm{\pi(h)(x)}^2\dd{h} \le C^2\norm{x}^2
\end{equation}
for every $x\in\h$ and every $g\in G$ and
\begin{equation}\label{eq:2bb}
\frac{1}{\abs{F_N}} \sumi_{F_N}\norm{\pi(h)^{\ast}(x)}^2\dd{h} \le C^2\norm{x}^2
\end{equation}
for every $x\in\h$. 
Then $\pi$ is unitarizable and there exists an invertible $L\in\B$ with $\|L^{-1}\| \|L\| \le C^2$ such that $L^{-1}\pi L$ is a unitary representation of $G$.
\end{theorem}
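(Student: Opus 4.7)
The plan is to reduce Theorem \ref{thm:extra} to Theorem \ref{thm:amenthm} applied with $Q=G$. Taking $g$ equal to the identity in \eqref{eq:2aa} recovers \eqref{eq:2a}, and \eqref{eq:2bb} is precisely \eqref{eq:2b}, so the only missing hypothesis is \eqref{eq:22}. I will in fact derive the stronger fact that $\pi$ is uniformly bounded on $G$, with $\sup_{g\in G}\norm{\pi(g)}\le C^{2}$; once that is known, the integrand in \eqref{eq:22} is controlled by $C^{4}$ and \eqref{eq:22} reduces to the \F property \eqref{eq:f} for every $s\in G$.

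Uniform boundedness will be extracted in two steps. First I would reproduce the opening Cauchy--Schwarz computation of the proof of Theorem \ref{thm:amenthm} --- which uses only \eqref{eq:2bb}, unimodularity of $m$, and the symmetry $F_N^{-1}=F_N$ --- to obtain the lower Ces\`aro bound
$$\frac{1}{\abs{F_N}}\sumi_{F_N}\norm{\pi(g)y}^{2}\dd{g}\ \ge\ \frac{1}{C^{2}}\norm{y}^{2}$$
for every $y\in\h$ and every $N\ge 1$. Second, I would exploit the extra freedom provided by \eqref{eq:2aa}: since $\abs{F_Ng}=\abs{F_N}$ and the change of variable $h=h'g$ is measure-preserving, \eqref{eq:2aa} is equivalent to
$$\frac{1}{\abs{F_N}}\sumi_{F_N}\norm{\pi(h)\pi(g)x}^{2}\dd{h}\ \le\ C^{2}\norm{x}^{2}$$
for every $x\in\h$ and every $g\in G$. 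Applying the lower Ces\`aro bound to $y=\pi(g)x$ sandwiches this quantity between $C^{-2}\norm{\pi(g)x}^{2}$ and $C^{2}\norm{x}^{2}$, which forces $\norm{\pi(g)}\le C^{2}$ for every $g\in G$.

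With uniform boundedness established, and hence \eqref{eq:22} in force for every $s\in G$, Theorem \ref{thm:amenthm} applied with $Q=G$ yields unitarizability, together with the quantitative bound $\norm{L^{-1}}\norm{L}\le C^{2}$ inherited verbatim from its proof. The only mildly delicate point, and the main obstacle in the write-up, is the bookkeeping in the second step: the key observation is that \eqref{eq:2aa} is not merely a family of Ces\`aro estimates on the translated sets $F_Ng$, but, via right invariance of $m$, a Ces\`aro estimate on the shifted orbits $\{\pi(h)\pi(g)x:h\in F_N\}$ for every $g\in G$. Once that reinterpretation is made, combining it with the lower Ces\`aro bound deduced from \eqref{eq:2bb} is exactly what upgrades the averaged estimate to a pointwise one.
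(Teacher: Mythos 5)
Your proof is correct and follows essentially the same route as the paper: both arguments reduce the theorem to showing that \eqref{eq:2aa} and \eqref{eq:2bb} force $\pi$ to be uniformly bounded with $\norm{\pi(g)}\le C^2$, using right invariance to rewrite the average over $F_Ng$ as an average of $\norm{\pi(h)\pi(g)x}^2$ over $F_N$. The only (immaterial) difference is organizational: the paper gets $\abs{\ps{\pi(g)x}{y}}\le C^2\norm{x}\norm{y}$ in a single bilinear Cauchy--Schwarz estimate combining both hypotheses, whereas you first extract the lower Ces\`aro bound \eqref{eq:lower} from \eqref{eq:2bb} and then sandwich.
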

\begin{proof}
Let $g\in G$ and $x,y\in \h$. We have 
\begin{align*}
& \abs{\ps{\pi(g)x}{y}} = \frac{1}{\abs{F_N}} \sumi_{F_N}\abs{\ps{\pi(g)x}{y}}\dd{h}\\
 & =  \frac{1}{\abs{F_N}} \sumi_{F_N} \abs{\ps{\pi(h^{-1}g)x}{\pi(h)^{\ast}y}}\dd{h}\\
& \le  \frac{1}{\abs{F_N}} \sumi_{F_N} \norm{\pi(h^{-1}g)x} \, \norm{\pi(h)^{\ast}y}\dd{h}\\
& \le \left( \frac{1}{\abs{F_N}} \sumi_{F_N} \norm{\pi(h^{-1}g)x}^2\dd{h}\right)^{1/2} \left( \frac{1}{\abs{F_N}} \sumi_{F_N} \norm{\pi(h)^{\ast}y}^2\dd{h}\right)^{1/2}.
\end{align*}
Using the symmetry of $F_N$, the unimodularity of $G$ and the right invariance of the Haar measure we obtain
$$
\abs{\ps{\pi(g)x}{y}} \le \left( \frac{1}{\abs{F_Ng}} \sumi_{F_Ng} \norm{\pi(u)x}^2\dd{u}\right)^{1/2} \left( \frac{1}{\abs{F_N}} \sumi_{F_N} \norm{\pi(h)^{\ast}y}^2\dd{h}\right)^{1/2}.
$$
Now the boundedness conditions \eqref{eq:2aa} and \eqref{eq:2bb} give 
$$ \abs{\ps{\pi(g)x}{y}} \le C^2\norm{x}\norm{y}.$$
Thus $\pi$ is a uniformly bounded representation and so it is unitarizable.
\end{proof}
We obtain the following consequence. 
\begin{corollary}
\label{thm:amen}
Let $G$ be a $\sigma$-compact unimodular amenable group and suppose that $(F_N)_{N\ge 1}$ is a symmetric \F sequence. Let $\pi : G \mapsto \B$ be a (strongly) continuous representation of $G$ by invertible operators on a complex Hilbert space $\h$. Suppose that there is a constant $C\ge 1$ such that for every $g\in G$ we have
\begin{equation}\label{eq:21}
\frac{1}{\abs{F_Ng}} \sumi_{h\in F_Ng}\norm{\pi(h)}^2\dd{h} \le C^2.
\end{equation}
Then $\pi$ is unitarizable and there exists an invertible $L\in\B$ with $\|L^{-1}\| \|L\| \le C^2$ such that $L^{-1}\pi L$ is a unitary representation of $G$.
\end{corollary}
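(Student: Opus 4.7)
The plan is to derive this as a direct application of Theorem \ref{thm:extra}, by checking that the single hypothesis \eqref{eq:21} is strong enough to recover both boundedness conditions \eqref{eq:2aa} and \eqref{eq:2bb}. The key observation is that \eqref{eq:21} is phrased in terms of the operator norm $\|\pi(h)\|$, which dominates both $\|\pi(h)x\|/\|x\|$ and $\|\pi(h)^{\ast}x\|/\|x\|$.

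First, I would note that for every $x\in\h$, $g\in G$ and $h\in G$ one has $\|\pi(h)x\|^2 \le \|\pi(h)\|^2\|x\|^2$. Integrating over $F_Ng$ and dividing by $|F_Ng|$, the hypothesis \eqref{eq:21} gives
\[
\frac{1}{|F_Ng|}\sumi_{F_Ng}\norm{\pi(h)x}^2\dd{h} \le C^2\norm{x}^2,
\]
which is exactly \eqref{eq:2aa}. Next, specializing \eqref{eq:21} to $g=e$ and using $\|\pi(h)^{\ast}\| = \|\pi(h)\|$ in the same way yields
\[
\frac{1}{|F_N|}\sumi_{F_N}\norm{\pi(h)^{\ast}x}^2\dd{h} \le C^2\norm{x}^2,
\]
which is \eqref{eq:2bb}.

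Having verified both hypotheses of Theorem \ref{thm:extra}, the conclusion of the corollary, including the bound $\|L^{-1}\|\|L\| \le C^2$ on the similarity constant, follows immediately by invoking that theorem. The only thing to watch is that the constant $C$ transfers cleanly, but this is automatic since \eqref{eq:2aa} and \eqref{eq:2bb} are derived with the same $C$ as in \eqref{eq:21}; there is no genuine obstacle in the argument.
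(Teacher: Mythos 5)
Your proof is correct and follows essentially the same route as the paper: deduce \eqref{eq:2aa} and \eqref{eq:2bb} from \eqref{eq:21} via $\norm{\pi(h)x}\le\norm{\pi(h)}\norm{x}$ and $\norm{\pi(h)^{\ast}}=\norm{\pi(h)}$, then invoke Theorem \ref{thm:extra}. (The paper's printed proof cites Theorem \ref{thm:amenthm} at this point, but the hypotheses being verified are those of Theorem \ref{thm:extra}, so your citation is the intended one.)
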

\begin{proof}
As an operator and its adjoint have the same norm, \eqref{eq:21} implies \eqref{eq:2aa} and \eqref{eq:2bb} (for $g$ being the identity element). We apply Theorem \ref{thm:amenthm}.
\end{proof}
The following corollary concerns inner derivations. Recall that, for a group $G$ and a unitary representation $\pi : G \to \B$, the map $D : G \mapsto \B$ is called a \emph{derivation}, or a $\pi$-derivation, if it satisfies the Leibniz rule $D(gh) = D(g)\pi(h) + \pi(g)D(h)$. The derivation $D$ is said to be \emph{inner} if there is $T\in\B$ such that $D(g) = \pi(g)T - T\pi(g)$ for every $g$. 
\begin{corollary}\label{cor:deriv}
Let $G$ be a $\sigma$-compact unimodular amenable group and suppose that $(F_N)_{N\ge 1}$ is a symmetric \F sequence. Let $\pi : G \mapsto \B$ be a continuous unitary representation of $G$ and let $D : G \mapsto \B$ be a $\pi$-derivation. Suppose that there is a constant $C\ge 1$ such that for every $g\in G$ we have
\begin{equation}\label{eq:2aderiv}
\frac{1}{\abs{F_Ng}} \sumi_{F_Ng}\norm{D(h)}^2\dd{h} \le C^2.
\end{equation}
Then $D$ is inner.
\end{corollary}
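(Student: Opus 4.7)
The plan is to reduce the statement to Corollary \ref{thm:amen} applied to an \emph{upper-triangular dilation} of $\pi$ built from the derivation $D$, and then to recover $T$ from an invariant complement produced by the resulting unitarization.

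First, I would form the map $\rho : G \to \Bb{\h\oplus\h}$ defined by
$$\rho(g) = \begin{pmatrix} \pi(g) & D(g) \\ 0 & \pi(g) \end{pmatrix}.$$
The Leibniz identity $D(gh)=D(g)\pi(h)+\pi(g)D(h)$ is \emph{exactly} the multiplicativity condition $\rho(gh)=\rho(g)\rho(h)$, so $\rho$ is a representation; each $\rho(g)$ is invertible since $\pi(g)$ is unitary, and $\rho$ is strongly continuous (as is implicit in the integrability hypothesis on $D$). A direct $2\times 2$ block-matrix estimate, together with $\norm{\pi(h)}=1$, gives the pointwise bound
$$\norm{\rho(h)}^2 \le 3 + 2\norm{D(h)}^2.$$
Averaging against $F_Ng$ and using the hypothesis \eqref{eq:2aderiv} on $D$ yields
$$\frac{1}{\abs{F_Ng}}\sumi_{F_Ng}\norm{\rho(h)}^2 \dd{h} \le 3 + 2C^2$$
for every $g\in G$ and every $N$. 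Corollary \ref{thm:amen} therefore applies to $\rho$ and produces an invertible $L\in\Bb{\h\oplus\h}$ such that $L^{-1}\rho(\cdot)L$ is a unitary representation of $G$.

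Next, I would exploit that the subspace $\h_1:=\h\oplus\{0\}$ is $\rho$-invariant, as is visible from the upper-triangular form of $\rho$. Viewing $\h\oplus\h$ with the new Hilbertian inner product for which every $\rho(g)$ is unitary, the orthogonal complement of $\h_1$ in that new inner product is also $\rho$-invariant. Transporting the corresponding orthogonal projection back by $L$ gives a bounded idempotent $Q\in\Bb{\h\oplus\h}$ whose range is $\h_1$, whose restriction to $\h_1$ is the identity, and which commutes with $\rho(g)$ for every $g\in G$. These conditions force $Q$ to have the block form
$$Q = \begin{pmatrix} I & T \\ 0 & 0 \end{pmatrix}$$
for a unique $T\in\B$.

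Finally, I would read off the conclusion: writing $Q\rho(g)=\rho(g)Q$ in block form, the $(1,2)$-entry gives $D(g)+T\pi(g)=\pi(g)T$, i.e.\ $D(g)=\pi(g)T-T\pi(g)$, proving that $D$ is inner. The main obstacle is conceptual rather than computational: one must verify that the invariant complement really furnishes a bounded projection with the prescribed range and restriction, so that the resulting $Q$ has the precise block form above. Once this is justified, the algebra collapses to a one-line matrix identity and the Leibniz rule is inverted.
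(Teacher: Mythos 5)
Your proposal is correct and follows essentially the same route as the paper: form the upper-triangular representation $\pi_D(g)=\begin{pmatrix}\pi(g)&D(g)\\0&\pi(g)\end{pmatrix}$, verify the averaged bound so that Corollary \ref{thm:amen} applies, and deduce innerness from unitarizability. The only difference is that you spell out the final step (the invariant-complement/projection argument), whereas the paper simply cites \cite[Lemma 4.5]{PisierBook} for it.
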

\begin{proof}
We apply Corollary \ref{thm:amen} to the representation
$$ \pi_D(g) = \begin{bmatrix}
    \pi(g)       & D(g) \\
    0       &  \pi(g)
\end{bmatrix} \in \mathcal{B}(\h\oplus\h) .$$
Then $\pi_D$ is unitarizable and therefore $D$ is inner (see \cite[Lemma 4.5]{PisierBook}).
\end{proof}

\section{Dixmier-Day type theorems for semigroups}
Let us now turn to versions of the Dixmier-Day theorem for semigroups. We first record a version of the Dixmier-Day theorem for right-amenable locally compact semigroups. The proof of this result is similar to the case of groups and it is surely known to specialists. The proof can be also extracted from \cite[Prop. 2]{Fack}. However, as several proofs in the case of amenable groups use left invariant means and the setting in \cite{Fack} is that of left and right amenable semigroups (and of finite type representations), we decided to present a complete (short) proof. In the second part of this section we obtain a counterpart of the Dixmier-Day theorem for discrete semigroups possessing the strong \F condition (SFC) from \cite{Arga}. This is similar in spirit to Theorem \ref{thm:amenthm} for unimodular amenable groups.

\subsection{Right-amenable semigroups} Let $S$ be a \emph{locally compact semigroup}, that is a locally compact Hausdorff space $S$ endowed with an associative and separately continuous multiplication law. For $s\in S$ and $f$ in the algebra $CB(S)$ of all bounded continuous complex functions on $S$ we define $r_s(f) \in CB(S)$ by $r_sf(t) = f(ts)$. By definition, a \emph{right invariant mean} on $S$ is a state $m_r$ on $CB(S)$ such that $m_r(r_sf) = m_r(f)$ for any $f\in CB(S)$ and any $s\in S$. A locally compact semigroup $S$ is said to be \emph{right amenable} if there exists a right invariant mean on $S$. A \emph{continuous representation} $\pi$ of $S$ will be a map $\pi : S \mapsto \B$ from $S$ to the C$^*$-algebra of all bounded operators on $\h$ such that $\pi(st) = \pi(s)\pi(t)$ of every $s,t\in S$ and that is continuous for the strong operator topology on $\B$. We say that $\pi$ is an \emph{isometric representation} if $\pi(s)^{\ast}\pi(s) = I$ for any $s\in S$.

We record here the known analogue for right-amenable semigroups of the Dixmier-Day theorem.

\begin{theorem}\label{thm:right}
Let $S$ be a right amenable locally compact semigroup and let $\pi : S \mapsto \B$ be a continuous representation such that 
\begin{equation}\label{eq:bb}
m^2I \le \pi(s)^{\ast}\pi(s) \le M^2I \quad (s\in S)
\end{equation}
for some positive constants $m$ and $M$.
Then there exists an invertible operator $L\in \B$ with $\|L^{-1}\|\|L\| \le Mm^{-1}$ such that $L^{-1}\pi(\cdot)L$ is an isometric representation.
\end{theorem}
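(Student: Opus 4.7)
The plan is to imitate the Dixmier-Day averaging argument, replacing the Banach limit by the right invariant mean $m_r$ on $CB(S)$. For each pair $x,y \in \h$ I would introduce the scalar function $f_{x,y}(s) := \ps{\pi(s)x}{\pi(s)y}$. Strong continuity of $\pi$ makes this function continuous, and the upper bound in \eqref{eq:bb} gives $|f_{x,y}(s)| \le M^2\norm{x}\norm{y}$, so $f_{x,y} \in CB(S)$. I would then define a new sesquilinear form on $\h$ by
$$ [x,y] := m_r(f_{x,y}). $$
Sesquilinearity follows from linearity of $m_r$ combined with the sesquilinearity of $\ps{\cdot}{\cdot}$, while positivity of the state $m_r$ and the two-sided estimate \eqref{eq:bb} applied pointwise in $s$ give
$$ m^2 \norm{x}^2 \le [x,x] \le M^2 \norm{x}^2, $$
so $|x| := [x,x]^{1/2}$ is an equivalent Hilbertian norm with $m\norm{x} \le |x| \le M\norm{x}$.

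The isometric property comes from right invariance. For fixed $t \in S$, the semigroup law $\pi(st) = \pi(s)\pi(t)$ yields $f_{\pi(t)x,\pi(t)y}(s) = \ps{\pi(st)x}{\pi(st)y} = (r_t f_{x,y})(s)$, so
$$ [\pi(t)x, \pi(t)y] = m_r(r_t f_{x,y}) = m_r(f_{x,y}) = [x,y]. $$
Hence every $\pi(t)$ is isometric for $[\cdot,\cdot]$. To turn this into a similarity in the original inner product, I would invoke the Riesz representation theorem to obtain a bounded self-adjoint $P \in \B$ with $\ps{Px}{y} = [x,y]$; the two-sided bounds above translate into $m^2 I \le P \le M^2 I$, so $P$ is positive and invertible. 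Taking $L := P^{-1/2}$, the identity $\pi(t)^{\ast} P \pi(t) = P$ readily gives $(L^{-1}\pi(t)L)^{\ast}(L^{-1}\pi(t)L) = I$ for every $t \in S$, and $\norm{L}\norm{L^{-1}} \le M/m$ follows from the spectral bounds on $P$.

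The argument is essentially routine once the right invariant mean is in hand, so no genuine obstacle appears. The two items that require a small amount of care are (i) verifying that $f_{x,y}$ really belongs to $CB(S)$, which uses both strong continuity of $\pi$ and the upper bound from \eqref{eq:bb}, and (ii) aligning conventions so that the translated function $r_t f_{x,y}$ arising after the substitution $s \mapsto st$ matches the \emph{right} invariance property of $m_r$; this is precisely what justifies working with a right (rather than left) invariant mean under the semigroup multiplication $\pi(st)=\pi(s)\pi(t)$.
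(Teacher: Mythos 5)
Your proposal is correct and follows essentially the same route as the paper: the function $f_{x,y}(s)=\ps{\pi(s)x}{\pi(s)y}$, the mean $m_r(f_{x,y})$ as new inner product, the two-sided bound from \eqref{eq:bb}, and right invariance via $r_tf_{x,y}=f_{\pi(t)x,\pi(t)y}$. The only difference is that you spell out the construction of $L=P^{-1/2}$ and the similarity bound, which the paper leaves implicit by reference to the Dixmier--Day argument.
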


\begin{proof}
For every $x$ and $y$ in $\h$, let
$ f_{x,y}(t) = \ps{\pi(t)x}{\pi(t)y} .$
Using the Cauchy-Schwarz inequality we obtain $f_{x,y}\in CB(S)$. Denoting by $m_r$ the right invariant mean on $S$, we consider the inner product given by $m_r(f_{x,y})$ and the induced norm
$\abs{x}^2 = m_r(f_{x,x}).$
Then
$ m^2\norm{x}^2 \le \abs{x}^2 \le M^2\norm{x}^2 $
and $\abs{\cdot}$ is an equivalent norm.
As $\pi$ is a representation of $S$ we have
$ r_sf_{x,y} = f_{\pi(s)x,\pi(s)y}$
for any $s\in S$ and any $x$ and $y$. In the new norm $\abs{\cdot}$, the homomorphism $\pi$ is an isometric representation. Indeed, using the right invariance of $m_r$, we have
$ \abs{\pi(s)x}^2 = m_r(f_{\pi(s)x,\pi(s)x})  = m_r(r_sf_{x,x}) = m_r(f_{x,x}) = \abs{x}^2.$
The statement about the similarity constant follows from \eqref{eq:bb} in the same way as in the proof of the Dixmier-Day theorem.
\end{proof}

\subsection{Strong \F condition and ``isometrizable'' representations}
Suppose now that $S$ is a discrete semigroup. In this subsection $\abs{E}$ denotes the cardinality of the finite set $E\subset S$. The semigroup $S$ is said to satisfy \emph{the \F condition} (\emph{FC})
if for every finite subset $H$ of $S$ and every $\epsilon > 0$, there
is a finite subset $F$ of $S$ with $|Fs \setminus F| \leq \epsilon |F|$ for
all $s \in H$.
A semigroup $S$ satisfies \emph{the strong \F condition} (\emph{SFC})
if for every finite subset $H$ of $S$ and every $\epsilon > 0$, there
is a finite subset $F$ of $S$ with $|F \setminus Fs| \leq \epsilon |F|$
for all $s \in H$. It is known that
SFC implies right amenability (\cite{Arga}), which in turn
implies FC (\cite{DayIllinois}), but neither of these implications is reversible in general (\cite{DayIllinois,Klawe77}). However, for many semigroups (groups, right
cancellative semigroups, finite semigroups, compact topological semigroups, inverse semigroups, regular semigroups, commutative semigroups and semigroups with a left, right or two-sided zero element), right amenability coincides with the strong \F condition (SFC); see \cite{GrayKambites} and the references therein.

The direct proof that (SFC) implies (FC) is simple. Indeed, for any finite set $F\subset S$ and any $s\in S$ we have $\abs{F\setminus Fs} = \abs{F} - \abs{F\cap Fs}$ and $\abs{Fs\setminus F} = \abs{Fs} - \abs{F\cap Fs}$. Therefore
$$ \abs{F\setminus Fs} - \abs{Fs\setminus F} = \abs{F} - \abs{Fs} \ge 0$$
and so (SFC) implies (FC). This also implies the following reformulation: the (SFC) condition holds for the semigroup $S$ if and only if for every finite subset $H$ of $S$ and every $\epsilon > 0$, there
is a finite subset $F$ of $S$ with $\abs{Fs \,\triangle F} \leq \epsilon \abs{F}$
for all $s \in H$.
Moreover, if $S$ is countable, it follows that (SFC) is equivalent to the existence of a right \F sequence, that is a sequence of finite sets $(F_N)_{N\ge 1}$ such that
$$ \lim_{N\to\infty}\frac{\abs{F_{N}s\, \triangle \, F_N}}{\abs{F_N}} = 0 $$
for each $s\in S$.

\begin{theorem}\label{thm:SFC}
Let $S$ be a countable, discrete semigroup satisfying (SFC) and let $(F_N)_{N\ge 1}$ be a right \F sequence. Let $\pi : S \mapsto \B$ be a representation of $S$ such that
\begin{equation}\label{eq:bc}
m^2I \le \frac{1}{\abs{F_N}}\sum_{g\in F_N}\pi(g)^{\ast}\pi(g) \le M^2I 
\end{equation}
for some positive constants $m$ and $M$ and
\begin{equation}\label{eq:bd}
\lim_{N\to\infty}\frac{1}{\abs{F_N}}  \sum_{g\in (F_{N}s \, \triangle F_N)}\norm{\pi(g)}^2 = 0
\end{equation}
for any $s\in S$.
Then there exists an invertible operator $L\in \B$ with $\|L^{-1}\|\|L\| \le Mm^{-1}$ such that $L^{-1}\pi(\cdot)L$ is an isometric representation.
\end{theorem}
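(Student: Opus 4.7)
The plan is to follow the ultrafilter-limit argument from the proof of Theorem~\ref{thm:amenthm}, replacing the right-invariant Haar integration there by a counting sum over the right \F sets $F_N$. First, I would fix a non-principal ultrafilter $\mathcal{U}$ on $\N$ and define a new norm on $\h$ by
\[
\abs{x}^2 \;:=\; \lim_{\mathcal{U}} \frac{1}{|F_N|} \sum_{g\in F_N} \norm{\pi(g)x}^2.
\]
The two-sided operator inequality \eqref{eq:bc} forces the sequence under the limit to lie in $[m^2\norm{x}^2, M^2\norm{x}^2]$, so the ultrafilter limit exists and yields $m\norm{x}\le \abs{x}\le M\norm{x}$. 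Because each of the averaged quadratic forms arises from a positive sesquilinear form, so does the ultrafilter limit; hence $\abs{\cdot}$ is an equivalent \emph{Hilbertian} norm, with inner product $\lim_{\mathcal{U}} \frac{1}{|F_N|}\sum_{g\in F_N}\ps{\pi(g)x}{\pi(g)y}$.

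Next I would verify that each $\pi(s)$ is an isometry for $\abs{\cdot}$. Using $\pi(gs)=\pi(g)\pi(s)$, one writes
\[
\abs{\pi(s)x}^2 - \abs{x}^2 \;=\; \lim_{\mathcal{U}}\frac{1}{|F_N|}\!\left(\sum_{g\in F_N}\norm{\pi(gs)x}^2 \,-\, \sum_{g\in F_N}\norm{\pi(g)x}^2\right)\!,
\]
and reindexes the first sum by $h=gs$, turning it into a sum over $F_N s$. The terms supported on $F_N s\cap F_N$ cancel against the corresponding terms of the second sum, and what remains is supported on the symmetric difference $F_{N}s\,\triangle\, F_N$, with absolute value at most
\[
\frac{\norm{x}^2}{|F_N|}\sum_{g\in F_{N}s\,\triangle\, F_N}\norm{\pi(g)}^2,
\]
which tends to $0$ by hypothesis \eqref{eq:bd}. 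Thus $\abs{\pi(s)x}=\abs{x}$ for every $s\in S$ and every $x\in\h$.

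To finish, the equivalent Hilbertian norm $\abs{\cdot}$ takes the form $\abs{x}^2=\ps{Ax}{x}$ for some positive invertible $A\in\B$ with $m^2 I\le A\le M^2 I$. Setting $L:=A^{-1/2}$, the relation $\pi(s)^{\ast}A\pi(s)=A$ expressing isometry in the new norm rewrites as $(L^{-1}\pi(s)L)^{\ast}(L^{-1}\pi(s)L)=I$, so $L^{-1}\pi(\cdot)L$ is an isometric representation; the estimates $\norm{L}\le 1/m$ and $\norm{L^{-1}}\le M$ yield $\norm{L}\,\norm{L^{-1}}\le M/m$.

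The main delicate point is the reindexing $g\mapsto gs$ in the middle paragraph. When $S$ is right cancellative the map $F_N\to F_N s$ is bijective and the argument parallels exactly the group case of Theorem~\ref{thm:amenthm}; in the general SFC setting one has to confirm that the multiplicities produced by a non-injective $g\mapsto gs$ contribute only an error of the same size as the symmetric-difference term, which is the case because SFC forces $(|F_N|-|F_N s|)/|F_N|\to 0$, and this discrepancy is precisely what hypothesis \eqref{eq:bd} is designed to control.
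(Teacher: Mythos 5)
Your proposal is correct and follows essentially the same route as the paper's own proof: the same ultrafilter-limit norm, the same two-sided bounds deduced from \eqref{eq:bc}, the same reindexing $g\mapsto gs$ with the resulting error supported on $F_{N}s\,\triangle\,F_N$ and controlled by \eqref{eq:bd}, and the standard $L=A^{-1/2}$ conclusion for the similarity constant. The reindexing subtlety you flag for non-right-cancellative semigroups is written as an exact equality in the paper's proof as well, so your version is, if anything, slightly more attentive on that point.
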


In the case when $\pi$ is a representation satisfying \eqref{eq:bb}, the inequality \eqref{eq:bc} is also readily verified, while the condition \eqref{eq:bd} holds true since $(F_N)_{N\ge 1}$ is a right \F sequence. Therefore Theorem \ref{thm:SFC} implies Theorem \ref{thm:right} for countable, discrete semigroups satisfying (SFC).

\begin{proof}
We use the same idea as in Theorem \ref{thm:amen}.
Let $\mathcal{U}$ be a non-principal ultrafilter on $\N$.  Define a new norm
\begin{equation}\label{eq:SFCnorm}
\abs{x}^2 = \lim_{\mathcal{U}}  \left(\frac{1}{\abs{F_N}}\sum_{g\in F_N} \norm{\pi(g)x}^2 \right) ,
\end{equation}
by taking the limit along $\mathcal{U}$ of the bounded sequence in \eqref{eq:SFCnorm} indexed by $N$. Using \eqref{eq:bc} we obtain
\begin{equation*}
m \norm{x} \le \abs{x} \le M\norm{x}
\end{equation*}
and thus $\abs{\cdot}$ is an equivalent norm. It is also a Hilbertian norm for the inner product
$$ \lim_{\mathcal{U}}  \left( \frac{1}{\abs{F_N}}\sum_{g\in F_N} \ps{\pi(g)x}{\pi(g)y} \right) .
$$
Let $s\in S$. We can write
\begin{equation}\label{eq:nisombis}
\abs{\pi(s)x}^2 -  \abs{x}^2 = \lim_{\mathcal{U}}  \left( \frac{1}{\abs{F_N}}\sum_{g\in F_N} \norm{\pi(gs)x}^2 -  \frac{1}{\abs{F_N}}\sum_{g\in F_N} \norm{\pi(g)x}^2 \right) .
\end{equation}
As above, we have
\begin{align*}
& \abs{\frac{1}{\abs{F_N}}\sum_{g\in F_N} \norm{\pi(gs)x}^2 -  \frac{1}{\abs{F_N}}\sum_{g\in F_N} \norm{\pi(g)x}^2} \\
&= \abs{\frac{1}{\abs{F_N}}\sum_{h\in F_Ns} \norm{\pi(h)x}^2 -  \frac{1}{\abs{F_N}}\sum_{g\in F_N} \norm{\pi(g)x}^2}\\
&= \abs{\frac{1}{\abs{F_N}}\sum_{h\in F_{N}s\setminus F_N} \norm{\pi(h)x}^2 -  \frac{1}{\abs{F_N}}\sum_{g\in F_{N}\setminus F_{N}s} \norm{\pi(g)x}^2}\\
&\le \frac{1}{\abs{F_N}} \sum_{g\in F_{N}s\triangle F_N} \norm{\pi(g)x}^2  .
\end{align*}
Using \eqref{eq:bd} we obtain that the limit in \eqref{eq:nisombis} is zero. Thus $\pi(s)$ is an isometry in the new norm for each $s\in S$. 
\end{proof}
As was the case for groups, the asymptotic condition \eqref{eq:bd} in Theorem~\ref{thm:SFC} can be removed under additional conditions on \F sets. We will see in the next section that 
in the case of the semigroup $S=\N$ and of the  \F sets $F_N = \{0, 1, \cdots, N-1\}$, 
the second half of the chain of inequalities \eqref{eq:bc} implies \eqref{eq:bd}. Although we have some partial results, presently we do not know a useful characterization of \F sets with this property for general semigroups. 

\section{Operators similar to isometries}
\begin{proof}[Proof of Theorem \ref{thm:K}]
For $x,y\in \h$ the sequence of complex numbers
$$  \left( \frac{1}{N} \sum_{n=0}^{N-1}\ps{T^nx}{T^ny}\right)_{N\ge 1}
$$
is bounded. Indeed, condition \eqref{eq:1} and the Cauchy-Schwarz inequality imply
\begin{align*}
\abs{\frac{1}{N} \sum_{n=0}^{N-1}\ps{T^nx}{T^ny}} &\le \frac{1}{N} \sum_{n=0}^{N-1}\abs{\ps{T^nx}{T^ny}} \\
&\le \frac{1}{N} \sum_{n=0}^{N-1}\norm{T^nx}\norm{T^ny}\\
&\le \frac{1}{N} \left(\sum_{n=0}^{N-1}\norm{T^nx}^2\right)^{1/2}\, \left(\sum_{n=0}^{N-1}\norm{T^ny}^2\right)^{1/2}\\
& \le M\norm{x}\norm{y}.
\end{align*}
Let $\mathcal{U}$ be a non-principal ultrafilter on $\N$. Define an operator $F$ on $\h$ by
\begin{equation}\label{eq:ultraf}
\ps{Fx}{y} = \lim_{\mathcal{U}} \left( \frac{1}{N} \sum_{n=0}^{N-1}\ps{T^nx}{T^ny}\right).
\end{equation}
Then $F$ is a positive operator. Set $D=F^{1/2}$, the square root of $F$.

Let $h\in\h$. We have, using the Cauchy-Schwarz inequality and the second half of \eqref{eq:1},
\begin{align*}
\sum_{n=0}^{N-1}\norm{T^nh} &\le \sqrt{N}\left(\sum_{n=0}^{N-1}\norm{T^nh}^2\right)^{1/2}\\
 &\le N\sqrt{M}\norm{h} .
\end{align*}
Therefore
$$\frac{1}{N} \sum_{n=0}^{N-1}\norm{T^nh} \le \sqrt{M}\norm{h}$$
for every $h\in\h$, that is, $T$ is an \emph{absolutely Ces\`aro bounded} operator in the terminology of \cite{muller}. It follows from \cite[Theorem 2.4]{muller} that
\begin{equation}\label{zero}
\lim_{N\to\infty} \frac{\norm{T^N}}{\sqrt{N}} = 0.
\end{equation}
We have
\inserteq{\norm{Dx}^2 = \ps{Fx}{x} = \lim_{\mathcal{U}} \left( \frac{1}{N} \sum_{n=0}^{N-1} \norm{T^nx}^2\right)}{eq:newnorm}
and so
\begin{align*}
\norm{DTx}^2 - \norm{Dx}^2 & = \lim_{\mathcal{U}} \left( \frac{1}{N} \sum_{n=0}^{N-1} \norm{T^{n+1}x}^2 -  \frac{1}{N} \sum_{n=0}^{N-1} \norm{T^{n}x}^2\right)\\
& = \lim_{\mathcal{U}} \left( \frac{\norm{T^Nx}^2}{N} - \frac{\norm{x}^2}{N}\right) \\
& = \lim_{\mathcal{U}} \left( \frac{\norm{T^Nx}^2}{N}\right)\\
& = 0 \quad \textrm{ (using condition \eqref{zero})}.
\end{align*}
Hence $T$ is an isometry in the new norm \eqref{eq:newnorm}. As $\norm{Dx}^2 \le M\norm{x}^2$ it suffices to show that $D$ is invertible. Indeed, in this case \eqref{eq:newnorm} will be an equivalent, Hilbertian norm, and $T$ will be similar to an isometry.

We now undertake the proof that $D$ is invertible. Using the lower bound from \eqref{eq:1} we have
\inserteq{m\norm{Kx} \le \ps{Fx}{x} = \norm{Dx}^2.}{eq:FD}
Therefore
\inserteq{mK^*K \le D^2.}{eq:KD}
As the range $\R(K)$ of $K$ is closed, $0$ is an isolated point of $\sigma((K^*K)^{1/2})\cup \{0\}$. The inequality \eqref{eq:KD} implies that $0$ is an isolated point of $\sigma(D)\cup \{0\}$ and thus $\R(D)$ is closed. The relation \eqref{eq:FD} implies that the kernel $\n(D)$ of $D$ is included in the kernel $\n(K)$ of $K$. In particular, $\n(D)$ is finite dimensional.

Suppose that $\n(D) \neq \{0\}$ and let $x\in \n(D)\setminus\{0\}$. As $\norm{DTx}^2 = \norm{Dx}^2$ we obtain $T^nx \in \n(D)$ for every $n\ge 1$. Notice that $x \neq 0$ implies $T^nx \neq 0$ for every $n$. Indeed, $T$ is a one-to-one operator since $0$ cannot be an eigenvalue for $T$ because of the condition \eqref{eq:3}.

Now we prove by induction that the set $\{T^kx : 0 \le k\le n\}$ is linearly independent for every $n\ge 1$. Suppose that $\{ x, Tx \}$ is linearly dependent. Then there is $\lambda$ such that $Tx = \lambda x$ and, as $x\neq 0$, we have $\abs{\lambda} = 1$ by \eqref{eq:3}. Therefore, using the equality $T^jx = \lambda^j x$, we obtain
$$ 0 = \norm{DTx} = \lim_{\mathcal{U}} \left( \frac{1}{N} \sum_{n=0}^{N-1} \norm{T^{n}x}^2\right) = \norm{x}^2,$$
a contradiction. Thus $\{ x, Tx \}$ is linearly independent.

Suppose now that $\{T^kx : 0 \le k\le n\}$ is linearly independent for some $n\ge 1$ and that $\{T^kx : 0 \le k\le n+1\}$ is linearly dependent. Then
$$ \alpha_0 x + \alpha_1 Tx + \cdots + \alpha_n T^nx + T^{n+1} = 0$$
for some complex scalars $\alpha_0, \cdots , \alpha_n$ with at least one of them being non-zero. If $\alpha_0 = 0$, then, using the injectivity of $T$, we obtain a linear dependence relation between $x, Tx, \cdots , T^nx$ which contradicts the fact that $\{T^kx : 0 \le k\le n\}$ is linearly independent. Thus $\alpha_0 \neq 0$. Denote by $P$ the polynomial
$$P(z) = \alpha_0 + \alpha_1 z + \cdots + \alpha_n z^n + z^{n+1}$$
and let $\lambda$ be a root of $P$. Since $\alpha_0 \neq 0$ we have $\lambda \neq 0$. Consider now the vector
\begin{align*}
y &= T^n x + (\alpha_n + \lambda) T^{n-1}x + (\alpha_{n-1} + \alpha_n \lambda + \lambda^2)T^{n-2}x +\\
& \cdots +(\alpha_2 + \alpha_3 \lambda + \cdots + \alpha_n \lambda^{n-2} + \lambda^{n-1})Tx\\
& + (\alpha_1 + \alpha_2\lambda + \cdots + \alpha_n\lambda^{n-1} + \lambda^n)x .
\end{align*}
It follows from the definition of $y$ and from $P(\lambda) = 0$ that $Ty = y$. We have $y \neq 0$ since $\{T^kx : 0 \le k\le n\}$ is linearly independent. Hence $y$ is an eigenvector for the corresponding eigenvalue $\lambda$ and, using again condition \eqref{eq:3}, one must have $\abs{\lambda} = 1$. It follows from the definition of $y$ that $Dy = 0$. We obtain
$$0 = \norm{Dy} = \lim_{\mathcal{U}} \left( \frac{1}{N} \sum_{n=0}^{N-1} \norm{T^{n}y}^2\right) = \lim_{\mathcal{U}} \left( \frac{1}{N} \sum_{n=0}^{N-1} \abs{\lambda}^{n}\norm{y}^2\right) = \norm{y}^2,$$
a contradiction. Therefore $\{T^kx : 0 \le k\le n+1\}$ is linearly independent. By induction, $\{T^kx : 0 \le k\le n\}$ is linearly independent for every $n\ge 1$ and thus $\n(D)$ is a infinite dimensional subspace. This contradicts the hypothesis.

We finally obtain $\n(D) = \{0\}$. The self-adjoint operator $D$ is injective and has closed range, so it is invertible. Thus $DTD^{-1}$ is an isometry and the proof is complete.
\end{proof}
The following consequence was the initial motivation of our investigations.
\begin{corollary}\label{cor:ces}
Let $T\in\B$. Suppose that there are two positive constants $m$ and $M$ such that, for every $n\ge 1$ and every $x\in \h$,
\begin{equation}\label{eq:ces}
m^2\norm{x}^2 \le \frac{1}{N}\sum_{n=0}^{N-1}\norm{T^nx}^2 \le M^2\norm{x}^2 .
\end{equation}
Then $T$ is similar to an isometry.
\end{corollary}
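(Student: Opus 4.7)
The plan is to derive this as an immediate consequence of Theorem \ref{thm:K} by taking $K = I$. With this choice, the operator $K$ trivially has closed range and kernel $\{0\}$, which is finite-dimensional. Moreover, condition \eqref{eq:1} of Theorem \ref{thm:K} becomes precisely the hypothesis \eqref{eq:ces} (after replacing the labels $m, M$ by $m^2, M^2$). So the only point that needs checking is condition \eqref{eq:3}: every eigenvalue of $T$, if any, has modulus one.

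To verify \eqref{eq:3}, I would argue as the paper hints. Suppose $Tx = \lambda x$ with $x \neq 0$. Then $\|T^n x\|^2 = |\lambda|^{2n}\|x\|^2$, so
\begin{equation*}
\frac{1}{N}\sum_{n=0}^{N-1}\|T^n x\|^2 \;=\; \frac{\|x\|^2}{N}\sum_{n=0}^{N-1}|\lambda|^{2n}.
\end{equation*}
If $|\lambda| > 1$, this Ces\`aro mean grows like $|\lambda|^{2(N-1)}/N \to \infty$, contradicting the upper estimate in \eqref{eq:ces}. If $|\lambda| < 1$, then $\sum_{n\ge 0}|\lambda|^{2n}$ converges and the Ces\`aro mean tends to $0$, contradicting the lower estimate $m^2\|x\|^2 > 0$. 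The case $\lambda = 0$ is included in the second alternative. Therefore $|\lambda| = 1$, as required.

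With \eqref{eq:1} and \eqref{eq:3} both in hand and $K = I$ having closed range and finite-dimensional kernel, Theorem \ref{thm:K} applies and yields that $T$ is similar to an isometry. The main (and only) obstacle is the eigenvalue verification above, which is elementary; all the real work has already been done in the proof of Theorem \ref{thm:K}, where the hypothesis on $K$ was used to control the kernel of the limit operator $D$ constructed via an ultrafilter.
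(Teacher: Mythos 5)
Your proposal is correct and follows essentially the same route as the paper: apply Theorem \ref{thm:K} with $K=I$ and check the eigenvalue condition \eqref{eq:3} by noting that for $Tx=\lambda x$ the Ces\`aro means equal $\frac{\|x\|^2}{N}\sum_{n=0}^{N-1}|\lambda|^{2n}$, which violates \eqref{eq:ces} unless $|\lambda|=1$. No gaps.
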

\begin{proof}
Let $\lambda$ be an eigenvalue of $T$ verifying $Tx=\lambda x$ for a unit vector $x$.
Equation \eqref{eq:ces} implies
$$ m^2 \le \frac{1}{N}\sum_{n=0}^{N-1} \abs{\lambda}^{2n} \le M^2 .$$
Thus the sequence $\abs{\lambda}^{2n}$ does not go to zero or to infinity as $n$ tends to infinity. We obtain that any eigenvalue of $T$ is of modulus one. We apply Theorem \ref{thm:K} with $K=I$.
\end{proof}

\begin{remark} We can give a simple proof of Theorem \ref{thm:K}, not using limits along ultrafilters, for the particular case when $K=I$ (the identity operator) and $m=1$ in \eqref{eq:1} ($T$ is expansive).

\smallskip

Here is the proof. Let $T\in \B$ be an operator such that $T^*T\ge I$ and
$$\left\|\sum_{j=0}^n T^{*j}T^j\right\|=O(n) \quad \text{ as } n\to \infty .$$
Let us denote $A_n=\frac{1}{n+1} \sum_{j=0}^n T^{*j}T^j$ for $n\ge 1$. Since $T$ is expansive, we have $$I\le A_n\le T^{*n}T^n$$ and so
$$
A_{n+1}-A_n=\frac{1}{n+1}(A^{* (n+1)}A^{n+1}-A_{n+1})\ge 0.
$$
Hence the sequence $\{A_n\}_{n\ge 1}$ is increasing and, by hypothesis, bounded. Thus $\{A_n\}$ strongly converges to an operator $A$. As $A_n\ge I$ for $n\ge 1$, the limit operator $A$ satisfies $A\ge I$. In particular, $A$ is invertible. In addition, we obtain
\begin{eqnarray*}
T^*AT&=& s-\lim_{n\to \infty} T^*A_nT= s-\lim_{n\to \infty} \frac{1}{n+1} \sum_{j=1}^{n+1} A^{*j}A^j\\
&=& s-\lim_{n\to \infty} \left(\frac{n+2}{n+1}A_{n+1}-\frac{I}{n+1}\right)=A.
\end{eqnarray*}
Then the new norm $\abs{x}^2 = \norm{A^{1/2}x}^2 = \ps{Ax}{x}$ is an inner product norm which is equivalent to the original one since $A$ is invertible. With respect to this norm $T$ is an isometry since $T^*AT=A$.
\end{remark}

\bibliographystyle{amsalpha}

\begin{thebibliography}{99}

\bibitem{Arga}
L.~N. Argabright and C.~O. Wilde,
Semigroups satisfying a strong {F}\o lner condition.
{\em Proc. Amer. Math. Soc.}, 18(1967), 587--591.

\bibitem{BerksonG} E. Berkson, T.A. Gillespie, Mean-$2$-bounded operators on Hilbert space and
weight sequences of positive operators, \emph{Positivity}  3(1999),101--133.

\bibitem{muller}
T. Berm\'udez, A. Bonilla, V. M\"{u}ller, Alfredo Peris,
 Ces\`aro bounded operators in Banach spaces, {\tt arXiv:1706.03638}.

\bibitem{vC1} J. A. van Casteren,
Operators similar to unitary or selfadjoint ones.
\emph{Pacific J. Math.} 104 (1983), no. 1, 241--255.

\bibitem{vC2} J. A. van Casteren,
Boundedness properties of resolvents and semigroups of operators, in : \emph{Linear operators} (Warsaw, 1994), 59--74,
Banach Center Publ., 38, Polish Acad. Sci. Inst. Math., Warsaw, 1997.

\bibitem{Day} M.M. Day,
Means for the bounded functions and ergodicity of the bounded representations of semi-groups.
\emph{Trans. Amer. Math. Soc.} 69(1950), 276--291.

\bibitem{DayIllinois}
M.~M. Day, Amenable semigroups. {\em Illinois J. Math.}, 1(1957), 509--544.

\bibitem{Dixmier} J. Dixmier,
Les moyennes invariantes dans les semi-groupes et leurs applications,
\emph{Acta Sci. Math. Szeged} 12, (1950). Leopoldo Fej\'er et Frederico Riesz LXX annos natis dedicatus, Pars A, 213--227.

\bibitem{Emerson} W.R. Emerson,
Large symmetric sets in amenable groups and the individual ergodic theorem.
\emph{Amer. J. Math.} 96 (1974), 242--247.

\bibitem{Fack} T. Fack,
A Dixmier's theorem for finite type representations of amenable semigroups.
\emph{Math. Scand.} 93 (2003), no. 1, 136--160.

\bibitem{GrayKambites}
R. D. Gray, M. Kambites, Amenability and geometry of semigroups,
\emph{Trans. Amer. Math. Soc.} 369(2017), 8087--8103.

\bibitem{GuoZwart}
 B.Z. Guo, H. Zwart, On the relation between stability of
continuous- and discrete-time evolution equations via the Cayley transform, 
\emph{Integral Equations Operator Theory} 54(2006), 349--383.

\bibitem{Klawe77}
M.~Klawe, Semidirect product of semigroups in relation to amenability,
  cancellation properties, and strong {F}\o lner conditions, 
  {\em Pacific J. Math.} 73(1977), 91--106.

\bibitem{Nagy} B. Sz.-Nagy,
On uniformly bounded linear transformations in Hilbert space,
\emph{Acta Univ. Szeged. Sect. Sci. Math.} 11(1947), 152--157.

\bibitem{NaTa} M. Nakamura, Z. Takeda,
Group representation and Banach limit.
\emph{Tohoku Math. J.} (2) 3(1951), 132--135.

\bibitem{Namioka} I. Namioka,
F\o lner's conditions for amenable semi-groups.
\emph{Math. Scand.} 15(1964), 18--28.

\bibitem{vN}
J.~von Neumann,
\newblock Zur allgemeinen theorie des ma{\ss}es.
\newblock {\em Fund. Math.}, 13:73--111, 1929.

\bibitem{nevo}
A. Nevo, 
Pointwise ergodic theorems for actions of groups, in : \emph{Handbook of dynamical systems}. Vol. 1B, 871--982, Elsevier, Amsterdam, 2006. 

\bibitem{Paterson}
A.~L.~T. Paterson,
\newblock {\em Amenability}, volume~29 of {\em Mathematical Surveys and
  Monographs}.
\newblock American Mathematical Society, Providence, RI, 1988.

\bibitem{PisierBook}
G.  Pisier,
\emph{Similarity  problems  and  completely  bounded  maps}.
Second,  expanded  edition.
Includes  the  solution  to  “The  Halmos  problem”.  \emph{Lecture  Notes  in  Mathematics},  1618.
Springer-Verlag, Berlin, 2001.

\bibitem{PisierSurvey} G. Pisier,
Are unitarizable groups amenable?, in : \emph{Infinite groups: geometric, combinatorial and dynamical aspects}, 323--362,
Progr. Math., 248, Birkhäuser, Basel, 2005.

\bibitem{Pruv}
B. Pruvost,
Analytic equivalence and similarity of operators.
\emph{Integral Equations Operator Theory} 44(2002), 480--493.

\bibitem{Pruv2}
B. Pruvost, \emph{Probl\`{e}mes de similarit\'{e}s, op\'{e}rateurs de type Foguel et calcul fonctionnel}, PhD thesis, Univ. Lille, 2003.

\bibitem{Tessera} R. Tessera,
Large scale Sobolev inequalities on metric measure spaces and applications,
\emph{Rev. Mat. Iberoam.} 24 (2008), no. 3, 825--864.

\end{thebibliography}

\end{document}